 \newtheorem{theorem}{Theorem}[section]
 \newtheorem*{thm*}{Theorem}
 \newtheorem{proposition}[theorem]{Proposition}
 \newtheorem{definition}[theorem]{Definition}
  \theoremstyle{definition}
 \newtheorem{example}[theorem]{Example}
 \newtheorem{remark}[theorem]{Remark}
  \newtheorem*{remark*}{Remark}
\newcommand{\g}{\ensuremath{\mathfrak{g}}}
\newcommand{\h}{\ensuremath{\mathfrak{h}}}
\newcommand{\rar}[1]{\overset{\rightarrow}{#1}}
\newcommand{\bt}{\mathbf{t}}                  %target
\newcommand{\bs}{\mathbf{s}} 
\newcommand{\cB}{\mathcal{B}} 
\begin{document}

 \author{Marco Zambon}
\email{marco.zambon@kuleuven.be}
\address{KU Leuven, Department of Mathematics, Celestijnenlaan 200B box 2400, BE-3001 Leuven, Belgium.}

 % \footnote{2020 AMS subject classification: 22A22, 53C29; Secondary: 53C12, 53D17.\\
%\indent Keywords:  Holonomy, conjugation, Lie algebroid,  singular foliation.  }
\subjclass[2010]{Primary:  22A22, 53C29; Secondary: 53C12, 53D17.\\
%22A22  	Topological groupoids (including differentiable and Lie groupoids)
%53C29 Issues of holonomy in differential geometry
	%53C12 Foliations (differential geometric aspects)
%57R30 Foliations in differential topology; geometric theory
%53D17 Poisson manifolds; Poisson groupoids and algebroids
\indent Keywords:  Holonomy, conjugation, Lie algebroid,  singular foliation.
}

 \title[ ]{Holonomy transformations for Lie subalgebroids}

\begin{abstract}
Given a foliation, there is a well-known notion of holonomy, which can be understood as an action that differentiates to the Bott connection on the normal bundle. We present an analogous notion for Lie subalgebroids, consisting of an effective action of the minimal integration of the Lie subalgebroid,
%In the special case of Lie subalgebras, the action is described explicitly in terms of the conjugation action.  
{and provide an explicit description in terms of conjugation by bisections.
The construction is done in such a way that it easily extends to singular subalgebroids, which provide our main motivation}. 
  \end{abstract}

\date{}
\maketitle

\begin{center}
{\it Dedicated to the memory of Kirill Mackenzie}
\end{center}

\setcounter{tocdepth}{1} %hides subsections in TOC
\tableofcontents

 \section*{{Introduction}}
 
This note extends some well-known geometric constructions from involutive distributions (i.e. foliations) to wide Lie subalgebroids, and even further to singular subalgebroids.
Although at present the author's
 main motivation   is given by  applications for singular subalgebroids, {except for the last section} this note focuses on the case  of wide Lie subalgebroids, for which the constructions can be described more concretely.
Lie algebroids and Lie groupoids were a dear subject to Kirill Mackenzie, who made numerous and important contributions to the subject since its early days. His monograph \cite{MK2} is a standard reference, which is also helpful to make the field accessible to a wide community of researchers. I used Kirill's book in many projects, and the present one is no exception; indeed, this note expands on some of the notions in Chapter 3 of his book. 
 \bigskip

The following is well-known in foliation theory. Let $F$ be an involutive distribution on a manifold $M$; by the Frobenius theorem, it corresponds to a regular foliation, {whose leaves are tangent to $F$}. There is a canonical flat partial connection on the normal bundle $NF{:=TM/F}$ to the foliation, called \emph{Bott connection}. It is given by $$\nabla_X \underline{Y}=\underline{[X,Y]},$$ where $X\in \Gamma(F)$, $Y\in \Gamma(TM)$ and $\underline{Y}:=Y \text{ mod } F$. 

Further, a Lie groupoid integrating the Lie algebroid $F$ is given by  $H(F) \rightrightarrows M$, the \emph{holonomy groupoid} of the foliation.  
{To describe it, for all points $x$ of $M$ fix smoothly a slice $S_x$ transverse to the foliation. Given any two points $x,y$ lying on the same leaf, consider a path $\gamma$ from $x$ to $y$ contained in the leaf.
Extend $\gamma$ smoothly to a family of paths, starting at points of $S_x$ and ending at points of $S_y$, each of which is contained in a leaf. While the extension is not unique, the germ of the resulting diffeomorphism between the slices (mapping starting point to endpoint) is well-defined, and we refer to it as  {\it holonomy transformation}. The holonomy groupoid $H(F)$ consists of all 
paths contained in leaves of $F$, after identifying any two of them if they induce the same holonomy transformation.}
Thus by construction there is an action\footnote{We use the term ``action'' in a loose way, since we only obtain \emph{germs} of diffeomorphisms between slices.}
 of the Lie groupoid $H(F)$ on the fiber bundle $\coprod_{x\in M}S_x\to M$, which moreover is effective  and  
leaves invariant the copy of $M$ given by the ``base points'' of the slices. 
``Effective'' means that the   action map
 $$\chi\colon H(F)\to \coprod_{{x,y}\in M} \textit{GermDiff\,}(S_x,S_y)$$
 is injective. {Further, the choice of slices is immaterial, since if $S_x$ and $S'_x$ are two slices through the point $x\in M$, there is a canonical identification between them  which intertwines holonomy transformations (see for instance \cite[\S2.1, page 22]{FolLieGr}).
 }

The relation between the above objects is the following:

\begin{center} 
\emph{The action $\chi$ of the holonomy groupoid differentiates   to the Bott connection $\nabla$.} 
\end{center}
We describe in detail this relation, following \cite[Lemma 3.11]{AZ2}. We have:
\begin{enumerate}
\item  [i)] A Lie groupoid action $\chi$ of $H(F)$ on $\coprod_{x\in M}S_x$ (holonomy action),
\item [ii)] A Lie groupoid representation   of $H(F)$ on $NF$ (linearized holonomy),
\item [iii)] A Lie algebroid representation $\nabla$ of $F$ on $NF$ (Bott connection).
\end{enumerate}
For the passage i) $\to$ ii), given the action map $\chi$, 
 take the derivative of the holonomy transformations at the base points of the slices.
 For the passage ii) $\to$ iii),  view the representation of $H(F)$ as a Lie groupoid morphism  $\Psi\colon H(F)\to \textrm{Iso}(NF)$, and  
take  the associated Lie algebroid morphism.

\bigskip
 \paragraph{\bf Main results.}
Now let $A$ be a Lie algebroid over a manifold $M$ and $B$ a wide Lie subalgebroid \cite[Def. 3.3.21]{MK2}. The same formula as for the Bott connection defines a representation $\nabla$ of $B$ on $A/B$.
Assume that $A$ is an integrable Lie algebroid, and fix a source connected Lie groupoid $G\rightrightarrows M$ integrating it. Moerdijk-Mr{\v{c}}un \cite{MMRC} showed that there is a ``minimal'' Lie groupoid $H_{min}\rightrightarrows M$ integrating $B$ endowed with an immersion $\Phi\colon H_{min}\to G$. Choose smoothly, for every $x\in M$, a slice $S_x\subset \bs^{-1}(x)$   transverse to ${B}_x$  inside the source fiber $\bs^{-1}(x)\subset G$. Here ${B}_x$  denotes the fiber of $B$ at $x$. 

 We show  in  Thm.~\ref{thm:main} and Prop.~\ref{prop:Bott}:
\begin{itemize}
\item[i)]
There is a canonical action of $H_{min}$ on the fiber bundle $\coprod_{x\in M}S_x$, given by a groupoid morphism  
$$\chi\colon H_{min}\to \coprod_{{x,y}\in M} \textit{GermDiff\,}(S_x,S_y).$$
\item[ii)] {The above action $\chi$ gives rise to a representation of $H_{min}$ on the vector bundle $A/B$ over $M$,  by taking derivatives of germs of diffeomorphisms.}
\item[iii)] 
%This  action $\chi$ of $H_{min}$  differentiates, in the sense explained above, 
{The above representation of $H_{min}$ differentiates}
to the canonical representation $\nabla$ of $B$ on $A/B$ (the ``Bott connection'').  \end{itemize}
 The morphism $\chi$ in i) is easily obtained following the constructions given in  {\cite[\S2]{MMRC}, and up to isomorphism it is independent of the choice of slices}.
One of the contributions of this note is to provide an alternative characterization in terms of conjugations by bisections (see Prop.~\ref{prop:agree}).
We show that while $\chi$ is not injective, the pair $(\chi,\Phi)$ does define an injective map 
on $H_{min}$ (see Remark ~\ref{rem:inj}). For the sake of exposition, we spell out the above results for the case of Lie subalgebras in Ex.~\ref{ex:lieal},~\ref{ex:liealdescr},~\ref{ex:liealBott}.  Finally,  in Thm.~\ref{thm:mainsing} 
we extend item i) above from  wide Lie subalgebroids to  the {singular subalgebroids} {introduced in \cite{AZ3}}, {and comment on how the other results extend to singular subalgebroids.}

 \bigskip
\noindent{\bf Motivation.}
Our motivation to consider $\chi$ comes from the integration of singular subalgebroids, which we address in  \cite{AZ4}. 
A singular subalgebroid is a submodule $\cB$ of the compactly supported sections of $A$ which is locally finitely generated and involutive. Canonically associated to it  there is a topological groupoid  $H^G(\cB)\rightrightarrows M$, which comes with a morphism $\Phi\colon H^G(\cB)\to G$ \cite{AZ3}.
In Thm.~\ref{thm:mainsing} we extend item 1) above showing that $H^G(\cB)$
admits an   {effective} action on the union of slices. This means to each element of $H^G(\cB)$ there is an associated holonomy transformation, in an \emph{injective} way.
This property -- which is satisfied by  $H^G(\cB)$ but clearly fails for any non-trivial quotient of it --  singles   out $H^G(\cB)$ as a special integration.

Another motivation is the following: the map $\chi$ will be relevant when extending some of the results of \cite{GarmendiaVillatoro} from singular foliations to singular subalgebroids. 
Recall that there Garmendia-Villatoro provide an alternative construction of the holonomy groupoid of a singular foliation $\mathcal{F}$, by taking a quotient of the space of $\mathcal{F}$-paths
(this is closer in spirit to the integration of Lie algebroids by Lie algebroid paths).

 \bigskip
\noindent{\bf Relation to homogeneous spaces.} 
{ 
Let again $A$ be a Lie algebroid over $M$, $B$ a wide Lie subalgebroid, and $G\rightrightarrows M$ a source-connected Lie groupoid integrating $A$. }

{
Assume that $B$ is integrated by a \emph{closed, embedded} Lie subgroupoid $H$ of $G$.
% i.e. that the  immersion $\Phi\colon H_{min}\to G$ is an embedding.
 Then the fiber bundle $\coprod_{x\in M}S_x$, and the $H$-action of i) above, admit a simpler description. Consider the orbit space $H\backslash G$ of the $H$-action by left multiplication on the map $\bt\colon G\to M$, together with the map $H\backslash G\to M$ induced by the source map of $G$. The above fiber bundle is then canonically isomorphic to a neighborhood of the identity section of $H\backslash G$. There is a (left) action of $H$ on $H\backslash G$, used in \cite{LGVoglaire},
for which an element $h$ acts by $[g]\mapsto [g\cdot h^{-1}]$. Under the canonical identification, this action agrees with the action of i), as one sees using eq. \eqref{eq:formualconj}. This observation simplifies the arguments in our Example \ref{rem:normal}.
Further this observation implies that, in the setting of closed embedded Lie subgroupoids, the results of Prop.~\ref{prop:Bott} already appeared  in \cite{LGVoglaire}, although somewhat implicitly. (See Remark 4.3 and the proof of Prop. 4.14 in \cite{LGVoglaire}, where the arguments are similar to those we give in \S\ref{subsec:firstproof}.) The main objective of  \cite{LGVoglaire} is to relate the linearizability of the $H$-action and the Atiyah class \cite{AtHomLeibniz}.
} 

{In the general case, i.e. when the  immersion $\Phi\colon H_{min}\to G$ is not an embedding, one can work with local Lie groupoids. Denote by $G_{loc}$ and $H_{loc}$ small enough neighbourhoods of the identity sections in $G\rightrightarrows M$ and $H_{min}\rightrightarrows M$ respectively. Then $H_{loc}\backslash G_{loc}$  is canonically isomorphic to $\coprod_{x\in M}S_x$, and carries a (left) action of $H_{loc}$ given by $[g]\mapsto [g\cdot \Phi(h^{-1})]$ as above. This action extends to an $H_{min}$-action, but not simply using the same formula: for $h\in H_{min}$, in general $g\cdot \Phi(h^{-1})$ lies ``far'' from the identity section of $G$ and thus is not an element of $G_{loc}$. Instead, the  $H_{min}$-action can be described as conjugation by a suitable bisection: under the action of $h\in H_{min}$, the class
$[g]$ is mapped to $[\alpha(\bt(g))\cdot g \cdot\Phi(h)^{-1}]$,
where  
$\alpha:=\Phi(\alpha_H)$ for some bisection  $\alpha_H$ of $H_{min}$ through $h$.
This follows from our considerations in \S\ref{sec:conj}, and we were not able to locate it elsewhere in the literature.
 }
 
 {If we replace the wide Lie subalgebroid $B$ by a singular subalgebroid $\cB$,
  one is forced to work with the collection of slices $\coprod_{x\in M}S_x$, since the description  as an orbit space no longer holds.  
   This is the reason why, in this note, we   work exclusively with slices.}
  
 \bigskip
\noindent{\bf Conventions.} 
The
Lie algebroid of the Lie groupoid over $M$ is given by $\ker(\bs_*)|_M$, where $\bs$ denotes the source map. 
Bisections of the Lie groupoid are by default locally defined, and will be understood both as submanifolds (transverse to the source and target fibers), and as sections of the source map (inducing a diffeomorphism on the base manifold $M$). 
 
 \bigskip
  \paragraph{\bf Acknowledgements.} We thank Karandeep Singh and Joel Villatoro for discussions related to this note. {We also thank the two referees for their valuable comments, in particular for pointing out the relation of the holonomy action  to homogeneous spaces.}
We acknowledge partial support by the long term structural funding -- Methusalem grant of the Flemish Government, the FWO and FNRS under EOS project G0H4518N, the FWO research project G083118N (Belgium). 

 %\addtocontents{toc}{\protect\mbox{}\protect}%%%%%%%%%%Skips one line in TOC
\section{The holonomy map $\chi$}

Let $A$ be an integrable Lie algebroid over a manifold $M$, fix a source connected Lie groupoid $G\rightrightarrows M$ integrating $A$.
Let $B$ be a wide\footnote{This means that $B$ is a subbundle over the whole of $M$ \cite[Def. 3.3.21]{MK2}. In the following we will often write ``Lie subalgebroid'' to mean ``wide Lie subalgebroid''. In the terminology of \cite{AtHomLeibniz}, $(A,B)$ is called Lie pair.
} Lie subalgebroid of $A$; {we refer the reader to \cite[\S1]{LGVoglaire} for a list of examples.}
 Denote by $H^G(B)\rightrightarrows M$ the minimal integral\footnote{{In the introduction, the minimal integral was denoted by $H_{min}$. Here we adopt the notation $H^G(B)$,
%, which we will need in a more general setting in \S~\ref{sec:sing}, 
%to emphatize the dependence on the Lie groupoid $G$ and Lie subalgebroid $B$.
since it is the holonomy groupoid of Lie subalgebroid $B$ w.r.t. Lie groupoid $G$ \cite{AZ3}
}} 
 of $B$ over $G$ \cite[\S2]{MMRC}, i.e. the smallest Lie groupoid integrating $B$ that admits a  Lie groupoid morphism $$\Phi\colon H^G(B)\to G$$   integrating the inclusion $B\to A$. The morphism $\Phi$ is unique, it is   an immersion, but not necessarily injective. The source and target maps of $G$ will be denoted by $\bs,\bt$, those of $H^G(B)$ by  $\bs_H,\bt_H$. 

Choose smoothly, for every $x\in M$, a slice $S_x$ through $x$ inside the source fiber $\bs^{-1}(x)$ of $G$ so that $T_xS_x$ is transverse to ${B}_x$ (both are subspaces of $\ker(\bs_*)_x)$.

In this section we construct the holonomy map $\chi$, and we give an alternative characterization in terms of bisections.

\subsection{Existence of the holonomy map $\chi$} 
 
\begin{theorem}\label{thm:main}
There is a canonical  groupoid morphism  
$$\chi\colon H^G(B)\to \coprod_{{x,y}\in M} \textit{GermDiff\,}(S_x,S_y)$$
 mapping $h\in  H^G(B)$ to a germ of diffeomorphism
$$S_{\bs_H(h)}\to S_{ \bt_H(h)}.$$
\end{theorem}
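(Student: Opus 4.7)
Set $x:=\bs_H(h)$ and $y:=\bt_H(h)$. The plan is to construct $\chi(h)$ locally via lifts to bisections followed by conjugation in $G$. Around $h$ I choose a local bisection $\alpha_H\colon U\to H^G(B)$ with $\alpha_H(x)=h$; such a bisection exists for any element of a Lie groupoid. Pushing it forward via $\Phi$ yields a local bisection $\alpha:=\Phi\circ\alpha_H$ of $G$ through $\Phi(h)$, inducing the same local diffeomorphism $\phi:=\bt\circ\alpha$ of $M$ with $\phi(x)=y$. For $g\in G$ near $1_x$ I define the conjugation
$$c_\alpha(g):=\alpha(\bt(g))\cdot g\cdot \alpha(\bs(g))^{-1}.$$
A direct check gives $\bs(c_\alpha(g))=\phi(\bs(g))$ and $c_\alpha(1_x)=1_y$, so $c_\alpha$ restricts to a diffeomorphism from a neighborhood of $1_x$ in $\bs^{-1}(x)$ onto one of $1_y$ in $\bs^{-1}(y)$. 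By naturality of conjugation under the morphism $\Phi$, the differential of $c_\alpha$ at $1_x$ maps $B_x$ onto $B_y$, so $c_\alpha(S_x)$ is a slice through $1_y$ transverse to $B_y$.

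To land in the prescribed slice $S_y$, I use that the right-invariant vector fields coming from sections of $B$ form a regular involutive distribution tangent to source fibers of $G$. Its restriction to $\bs^{-1}(y)$ integrates locally to a regular foliation $\mathcal{F}_B$ whose leaf through $1_y$ agrees, near $1_y$, with the image $\Phi(\bs_H^{-1}(y))$. Since $c_\alpha(S_x)$ and $S_y$ are both transverse to $\mathcal{F}_B$ at $1_y$, there is a canonical germ of diffeomorphism $\pi\colon c_\alpha(S_x)\to S_y$ obtained by sliding along leaves. I set
$$\chi(h):=\pi\circ c_\alpha|_{S_x}\in \textit{GermDiff\,}(S_x,S_y).$$

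The main difficulty is to show that $\chi(h)$ is independent of the bisection $\alpha_H$. If $\alpha_H'$ is another local bisection of $H^G(B)$ through $h$, then $\beta_H:=\alpha_H'\cdot\alpha_H^{-1}$ is a local bisection of $H^G(B)$ through the identity at $y$, and $\beta:=\Phi(\beta_H)$ is a local bisection of $G$ through $1_y$. A direct computation yields $c_{\alpha'}=c_\beta\circ c_\alpha$, so it suffices to show that $c_\beta$ moves points of $\bs^{-1}(y)$ along leaves of $\mathcal{F}_B$. For $g\in\bs^{-1}(y)$, using $\beta(y)=1_y$, I compute $c_\beta(g)=\beta(\bt(g))\cdot g=\Phi(\beta_H(\bt(g)))\cdot g$, which lies in $\Phi(\bs_H^{-1}(\bt(g)))\cdot g$ --- precisely the leaf of $\mathcal{F}_B$ through $g$. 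Hence $\pi\circ c_\beta=\pi$ on $c_\alpha(S_x)$, and $\chi(h)$ is well defined.

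Finally, for multiplicativity, given composable $h_1,h_2$ with $\bt_H(h_1)=\bs_H(h_2)$, the product $\alpha_H^2\cdot\alpha_H^1$ of local bisections through them is a local bisection through $h_2\cdot h_1$, whose $\Phi$-push-forward is $\alpha^2\cdot\alpha^1$. A straightforward computation gives $c_{\alpha^2\cdot\alpha^1}=c_{\alpha^2}\circ c_{\alpha^1}$ on a common neighborhood of $1_x$, and the leafwise projections compose to give $\chi(h_2\cdot h_1)=\chi(h_2)\circ\chi(h_1)$. Units map to identity germs, completing the construction.
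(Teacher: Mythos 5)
Your construction is correct, but it takes a genuinely different route from the paper. The paper's proof works upstairs on $G$: it uses the Moerdijk--Mr\v{c}un description of $H^G(B)$ as the quotient of the holonomy groupoid $H(\rar{B})$ of the right-invariant foliation $\rar{B}$ by the right $G$-action, and defines $\chi(h):=R_{\Phi(h)}^{-1}\circ \Xi(\xi)$ for the distinguished lift $\xi$ of $h$, so that canonicity and multiplicativity are inherited from the classical holonomy morphism $\Xi$ of a regular foliation together with the right-invariance identity \eqref{eq:rightinv}. You instead build $\chi(h)$ directly as conjugation $c_\alpha=R_{\alpha^{-1}}\circ L_\alpha$ by the $\Phi$-image of a bisection of $H^G(B)$ through $h$, followed by the canonical sliding identification of transversals; this is precisely the map $\chi^{conj}$ of \S\ref{sec:conj} composed with the identification $\Psi_{\bt_H(h)}$, and by Prop.~\ref{prop:agree} it coincides with the paper's $\chi$. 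What your approach buys is a self-contained, formula-level description that never mentions $H(\rar{B})$ (and in particular makes the relation to the homogeneous-space picture transparent); what it costs is that well-definedness and multiplicativity must be checked by hand rather than inherited, and it does not by itself yield the injectivity of $(\chi,\Phi)$ discussed in Remark~\ref{rem:inj}, which relies on the injectivity of $\Xi$.

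One step deserves tightening. In the independence argument you conclude $\pi\circ c_\beta=\pi$ from the fact that $c_\beta(g)$ lies in the same \emph{leaf} of $\rar{B}|_{\bs^{-1}(y)}$ as $g$. Since $\pi$ is defined only as a germ via projection along plaques in a foliation chart around $1_y$, same leaf is not a priori enough: a leaf may meet the chart in several plaques. The gap is easily closed: writing $c_\beta(g)=\Phi(\beta_H(\bt(g)))\cdot g$ with $\beta_H(\bt(g))$ close to the unit section, one can join $1_{\bt(g)}$ to $\beta_H(\bt(g))$ by a short path in the source fibre of $H^G(B)$, whose $\Phi$-image right-translated by $g$ is a path in the leaf through $g$ that stays inside the chart; hence $c_\beta(g)$ lies in the same \emph{plaque} as $g$ for $g$ near $1_y$, and your conclusion follows. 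A similar (routine) remark applies to the compatibility of the leafwise projections with $c_{\alpha^2}$ in the multiplicativity step, where one uses that $c_{\alpha^2}$ preserves the foliation $\rar{B}$ because $\alpha^2$ is the image of a bisection of $H^G(B)$.
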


\begin{proof}

To describe $\chi$, we first
recall the construction of the minimal integral from \cite[\S2]{MMRC}.
From $B$ we obtain an involutive distribution $\rar{B}$ on $G$, given by 
$\rar{B}_g:=(R_g)_* B_{\bt(g)}$, where $R_g$ denotes the right-translation by $g\in G$. Notice that this distribution is tangent to the source fibers of $G$ and right-invariant. We denote  the holonomy groupoid of this foliation by $$H(\overset{\rightarrow}{B})\rightrightarrows G.$$
The Lie groupoid $G$ acts on itself by right-translation. Hence it also acts on the holonomy groupoid $H(\overset{\rightarrow}{B})$, by right-translating equivalence classes of paths  in the leaves of $\overset{\rightarrow}{B}$. This provides a principal right  action of $G$ on the Lie groupoid $H(\overset{\rightarrow}{B})$, and the quotient Lie groupoid is precisely the minimal integral $H^G(B)\rightrightarrows M$. Notice that the quotient map $\pi$ covers $\bt\colon G\to M$:
\begin{equation}\label{eqn:quot}
 \xymatrix{
H(\overset{\rightarrow}{B})  \ar[d] \ar@<-1ex>[d]  \ar[r]^{\pi} &H^{G}(\cB)  \ar[d] \ar@<-1ex>[d]  \\
G \ar[r]^{\bt} &   M }
\end{equation}
The morphism $\Phi\colon H^G(B)\to G$ is obtained  quotienting  the natural map from $
H(\overset{\rightarrow}{B})$ to the holonomy groupoid of the foliation by $\bs$-fibers, which is   $G   \times _{(\bs,\bs)} G$.

{\it Step 1.}
By definition of holonomy groupoid of a foliation,  there is an injective canonical groupoid morphism from 
$H(\overset{\rightarrow}{B})$ to the germs of diffeomorphisms between slices transverse to $\rar{B}$ in $G$. Through every point $g$ of $G$ we have a slice transverse to $\rar{B}_g$ inside the corresponding source fiber, by setting $S_g:=(R_g)_*S_{\bt(g)}$. By restriction to the  slices in these source-fibers, we obtain a   groupoid morphism\footnote{This morphism is trivial -- i.e. the foliation $\overset{\rightarrow}{B}$ has trivial holonomy -- if{f} the map $\Phi\colon H^G(B)\to G$ is injective. This follows from  \cite[Cor. 2.5]{MMRC}.}
$$\Xi\colon H(\overset{\rightarrow}{B}) \to  \coprod_{{g,g'}\in G} \textit{GermDiff\,}(S_g,S_{g'}),$$
which is injective, as a consequence of the fact that the regular foliation on $G$  by  $\bs$-fibers  has trivial holonomy.

We make an observation about $\Xi$. 
Let $\xi\in H(\overset{\rightarrow}{B})$ with source $x\in 1_M\subset G$, and let $g\in G$ with $\bt(g)=x$.
The germs of diffeomorphisms associated to $\xi$ and $\xi \cdot g$ (the image of $\xi$ under the action of the element $g$) are related by 
\begin{equation}\label{eq:rightinv}
\Xi(\xi)=R_g^{-1}\circ \Xi(\xi\cdot g)\circ R_g.
\end{equation}
This follows from the facts that  the involutive distribution $\overset{\rightarrow}{B}$ is right-invariant, and the action of $G$ on the holonomy groupoid $H(\overset{\rightarrow}{B})$ is induced by right-translation.

{\it Step 2.} 
For every $h\in H^G(B)$, there is a 
distinguished preimage $\xi\in H(\overset{\rightarrow}{B})$ under $\pi$, namely
the unique preimage with source equal to $1_{\bs_H(h)}$. 
The target of $\xi$ is then $\Phi(h)$, hence
the corresponding germ of diffeomorphism reads  $\Xi(\xi)\colon S_{1_{\bs_H(h)}}\to S_{ \Phi(h)}$.
We define $\chi$ by 
\begin{equation}\label{eq:defchi}
\chi(h):=R_{\Phi(h)}^{-1}\circ \Xi(\xi).  
\end{equation}

{\it Step 3.}
The map $\chi$ we just defined is a groupoid morphism, by the right-invariance  property expressed in Eq.~\eqref{eq:rightinv}. In more detail: given composable $h_2,h_1\in H^G(B)$, we have
$$
\chi(h_2)\circ\chi(h_1)=R_{\Phi(h_2)}^{-1}\circ \Xi(\xi_2)\circ R_{\Phi(h_1)}^{-1}\circ \Xi(\xi_1)
=R_{\Phi(h_2h_1)}^{-1}\circ \Xi(\xi_2\cdot \Phi(h_1))\circ \Xi(\xi_1)=\chi(h_2h_1).
$$
Here in the second equality we used Eq. ~\eqref{eq:rightinv} with  $g=\Phi(h_1)$ to rewrite $\Xi(\xi_2)$, and in the third equality  that $(\xi_2\cdot \Phi(h_1))\xi_1 \in H(\overset{\rightarrow}{B})$ is the distinguished  preimage of $h_2h_1$ under $\pi$. 
\end{proof}

\begin{figure}
 \begin{tikzpicture}[scale=1.0]
 \draw  (-3,0) -- (0,3) -- (3,0) -- (0,-3) -- cycle;
  \draw (-3,0)  -- (3,0);
 % \draw[dashed] (1.5,-1.5)  -- (-1.5,1.5); %s fiber
\node [below] at (-1.5,0) {$M$};
\node [above] at (0,-2) {$G$};
  
\def\ra{0.2}%%% for S segments 
 \draw[thick,blue] (0.5+\ra,-0.5-\ra)  -- (0.5-\ra,-0.5+\ra);
\draw[thick,blue] (0+\ra,0-\ra)  -- (0-\ra,0+\ra);
\node [] at (-0,0) {$\bullet$};
\draw[thick,blue] (-0.5+\ra,0.5-\ra)  -- (-0.5-\ra,0.5+\ra);
\draw[thick,blue] (-1+\ra,1-\ra)  -- (-1-\ra,1+\ra);
\node [] at (-1,1) {$\bullet$};

 \draw [->,red] (0.1,0.1) to [out=105,in=-15] (-1+0.1,1.1);
  \node[red]  at (0,1) {$\Xi(\xi)$}; 
 
\node [] at (1,1) {$\bullet$};
\node [right] at (1,1) {$\;g$};
\draw[thick,blue] (1.5+\ra,0.5-\ra)  -- (1.5-\ra,0.5+\ra);
\draw[thick,blue] (1+\ra,1-\ra)  -- (1-\ra,1+\ra);
\draw[thick,blue] (0.5+\ra,1.5-\ra)  -- (0.5-\ra,1.5+\ra);
\draw[thick,blue] (0+\ra,2-\ra)  -- (0-\ra,2+\ra);
\node [] at (0,2) {$\bullet$};

 \draw [->,red] (1.1,1.1) to [out=105,in=-15] (0.1,2.1);
 \node[red]  at (1.5,2) {$\Xi(\xi\cdot g)$}; 
\end{tikzpicture}

\caption{The Lie groupoid $G$,
%(depicted as the pair groupoid of the interval),
 with the right-invariant distribution $\rar{B}$ (in blue).
 An element $\xi\in H(\overset{\rightarrow}{B})$ induces a diffeomorphism $\Xi(\xi)$ between slices (slices not depicted). It is related to  $\Xi(\xi\cdot g)$ as explained in Step 1 of Thm.~\ref{thm:main}.
 }
\end{figure}
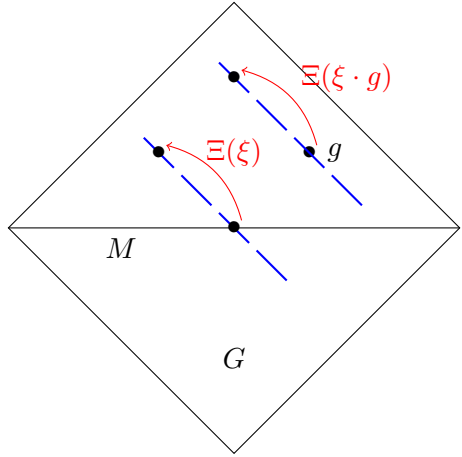

%%%% map \chi
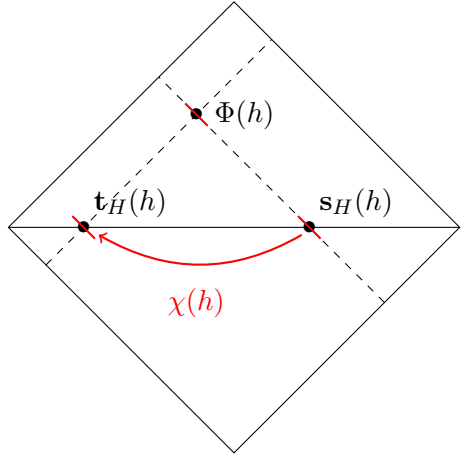
\begin{figure}
 \begin{tikzpicture}[scale=1.0]
 \draw  (-3,0) -- (0,3) -- (3,0) -- (0,-3) -- cycle;
  \draw (-3,0)  -- (3,0);
    \draw[dashed] (-2.5,-0.5)  -- (0.5,2.5); %t fiber
  \draw[dashed] (2,-1)  -- (-1,2); %s fiber

 \def\ra{0.15}%%% for S segments 
  
\node [] at (1,0) {$\bullet$};
\node [ above right]  at (1,0) {$\bs_H(h)$};
\draw[thick,red] (1+\ra,0-\ra)  -- (1-\ra,0+\ra);

\node [] at (-0.5,1.5) {$\bullet$};
\node [  right] at (-0.5,1.5) {$\;\Phi(h)$};
\draw[thick,red] (-0.5+\ra,1.5-\ra)  -- (-0.5-\ra,1.5+\ra);

\node [] at (-2,0) {$\bullet$};
\node [ above right] at (-2,0) {$\bt_H(h)$};
\draw[thick,red] (-2+\ra,0-\ra)  -- (-2-\ra,0+\ra);

\draw [->,thick,   red] (1-0.1,0-0.1) to [out=210,in=-30] (-2+0.2,0-0.1);
\node [red] at (-0.5,-1) {$\chi(h)$}; 	
\end{tikzpicture}
\caption{Given $h\in H^G(B)$, the construction of the map $\chi(h)$ as in Step 2 of Thm.~\ref{thm:main}.
In this picture the slices $S$ are zero-dimensional, but  are depicted as short red segments.}
\end{figure}

\begin{remark}
In the two extreme cases in which $B$ has rank zero or $B=A$,
the  map $\chi$ of Thm.~\ref{thm:main} is uninteresting: in the former case $H^G(B)$ is the trivial groupoid $M \rightrightarrows M$ and $\chi(x)=Id_{S_x}$ for all $x\in M$. In the latter case, the slices $S_x$ are all zero-dimensional.
\end{remark}

We check that, when $B$ is an involutive distribution, the morphism $\chi$ of Thm.~\ref{thm:main} recovers the usual holonomy map for foliations.
\begin{example}[Foliations]\label{ex:fol}
Let $F$ be an involutive distribution on $M$, i.e. a Lie subalgebroid of $A=TM$. Fix slices $S_x$ transverse to $F$ at all $x\in M$. We show that the map $\chi$ from Thm.~\ref{thm:main} is exactly the usual holonomy action of the foliation tangent to $F$.
Indeed, the pair groupoid $M\times M$ integrates $TM$, and $H^{M\times M}(F)=:H(F)\rightrightarrows M$ is the usual holonomy groupoid of {the foliation tangent to} $F$.
The resulting involutive distribution on the pair groupoid is $\overset{\rightarrow}{F}=F\times \{0\}$, the product  of $F$ and the trivial distribution on $M$. Hence 
its holonomy groupoid is {the product groupoid} $H(\overset{\rightarrow}{F})=H(F)\times M$.

Let $h\in H(F)$, denote $x=\bs_H(h)$ and $y=\bt_H(h)$, and denote by $hol_h\colon S_{x}\to S_{y}$ the associated germ of diffeomorphism. Consider the element $(h,1_x)$ of $H(\overset{\rightarrow}{F})$. Under the identification $$\bs^{-1}(x)=M\times \{x\}\cong M,\; (z,x)\mapsto z$$ given by the target map, the restriction of $\overset{\rightarrow}{F}$ to $\bs^{-1}(x)$ is just $F$. Hence
the holonomy diffeomorphism $\Xi(h,1_x)\colon S_{(x,x)}\to S_{(y,x)}$ between  slices contained in $\bs^{-1}(x)$, is exactly $hol_h$, upon the identification $S_{(y,x)}\cong S_y$.
Thus $\chi(h)=R_{(y,x)}^{-1}\circ \Xi(h,1_x)$ is precisely $hol_h\colon S_{x}\to S_{y}$.
\end{example}

\begin{remark}[Injectivity of $\chi$]\label{rem:inj}
The   morphism $\chi$ of Thm.~\ref{thm:main} is not injective in general. It is injective  when $A=TM$ (i.e. for foliations, by Ex.~\ref{ex:fol}) but not when $A$ is a Lie algebra, see Ex.~\ref{ex:lieal} below.
However the groupoid morphism  
$$(\chi,\Phi)\colon H^G(B)\to \coprod_{{x,y}\in M} \textit{GermDiff\,}(S_x,S_y)\,\times\, G$$
is injective. Indeed,  the composition {of this map} with right translation gives the injective assignment  $$h\mapsto (\chi(h),\Phi(h))\mapsto R_{\Phi(h)}\circ \chi(h)=\Xi(\xi).$$ Here $\Xi$ is the morphism appearing in the proof of Thm.~\ref{thm:main}, which is injective, and $\xi$ is the distinguished element of $H(\overset{\rightarrow}{B})$ associated to $h$. Notice that since both $\chi$ and $\Phi$ are morphisms  with base map $Id_M$, the morphism $(\chi,\Phi)$ takes values in the subgroupoid consisting of pairs of elements with matching source and matching target, which is a groupoid over $M$.
\end{remark}

\begin{remark}[Triviality of $\chi$]\label{rem:chitrivial}
One can wonder when the morphism   $\chi$ is a trivial groupoid morphism, i.e. when it maps all elements in the isotropy groups of  $H^G(B)$ to (germs of) identity diffeomorphisms on slices.

When $A$ is a Lie algebra (thus $B$ is a Lie subalgebra),   $H^G(B):=H$ is a connected Lie subgroup of $G$. In this case, $\chi$ is trivial if{f}
$H$ is a normal subgroup of $G$,
see Remark~\ref{rem:normal} below.

When $B$ is tangent distribution of a foliation, we just saw in Remark~\ref{rem:inj} that $\chi$ is injective,
so $\chi$ is trivial if{f} the isotropy groups of the holonomy groupoid of the foliation are trivial (i.e. the foliation has trivial holonomy).

\end{remark}

\subsection{The construction of $\chi$}\label{subsec:constr}
We make the morphism $\chi$ of Thm.~\ref{thm:main} more concrete, by making $H(\overset{\rightarrow}{B})\rightrightarrows G$ more concrete and by using  bisections to   describe the holonomy of a foliation.  

A theorem from \cite[\S 2.1]{AZ4}, specialized to the Lie subalgebroid $B$, reads:

\begin{theorem}\label{thm:trafogroidreg}
\label{thm:MoeMrcreg}
i) The Lie groupoid  $H(\overset{\rightarrow}{B})$ is canonically isomorphic 
%(over $Id_G$) 
to the transformation groupoid $$H^{G}({B})\times_{\bs_H,\bt}G.$$ 
The latter is the transformation groupoid of the action of $H^{G}(B)$ on the map $\bt \colon G \to M$ given by  $(h,g)\mapsto \Phi(h)g$.

ii) Under this isomorphism, 
the upper horizontal map $\pi$ in diagram~\eqref{eqn:quot} is  $(h,g)\mapsto h$.  
\end{theorem}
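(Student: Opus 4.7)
The plan is to construct an explicit isomorphism $\Psi\colon H^{G}(B)\times_{\bs_H,\bt}G\to H(\overset{\rightarrow}{B})$ using the ``distinguished preimage'' construction from the proof of Thm.~\ref{thm:main}, and to verify that it intertwines all the relevant structural data. Given $(h,g)$ with $\bs_H(h)=\bt(g)$, let $\xi_h\in H(\overset{\rightarrow}{B})$ be the distinguished preimage of $h$ under $\pi$, i.e.\ the unique preimage with source $1_{\bs_H(h)}$ (and hence target $\Phi(h)$). Define
\[
\Psi(h,g):=\xi_h\cdot g,
\]
where $\cdot$ denotes the principal right $G$-action on $H(\overset{\rightarrow}{B})$ induced by right translation of leafwise paths. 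Since this action preserves source and target (viewed in $G$), the element $\Psi(h,g)$ has source $g$ and target $\Phi(h)\cdot g$, matching the source and target of $(h,g)$ in the transformation groupoid.

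Next I would check that $\Psi$ is a bijection and a groupoid morphism. Injectivity is immediate: the source in $G$ recovers $g$, and then $\pi(\xi_h\cdot g)=\pi(\xi_h)=h$ (since $\pi$ is $G$-invariant), recovering $h$. Surjectivity uses that the $G$-action on $H(\overset{\rightarrow}{B})$ is principal and $\pi$ is its quotient map: any $\eta\in H(\overset{\rightarrow}{B})$ lies in a unique $G$-orbit, which contains a unique element whose source in $G$ is a unit, namely $\xi_{\pi(\eta)}$; hence $\eta=\xi_{\pi(\eta)}\cdot \bs(\eta)$. For compatibility with composition, the key calculation is exactly the one appearing in Step 3 of the proof of Thm.~\ref{thm:main}: for $(h_2,g_2)$ and $(h_1,g_1)$ composable in the transformation groupoid (so that $g_2=\Phi(h_1)g_1$), one gets
\[
\Psi(h_2,g_2)\cdot\Psi(h_1,g_1)=(\xi_{h_2}\cdot\Phi(h_1))\cdot g_1\;\cdot\;\xi_{h_1}\cdot g_1=\bigl((\xi_{h_2}\cdot\Phi(h_1))\cdot\xi_{h_1}\bigr)\cdot g_1,
\]
using that right translation by $g_1$ is a groupoid automorphism of $H(\overset{\rightarrow}{B})$. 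The middle factor is precisely the distinguished preimage $\xi_{h_2h_1}$ of $h_2h_1$, as observed in Step~3 of Thm.~\ref{thm:main}, so this equals $\Psi(h_2h_1,g_1)$. Finally, $\Psi$ sends units to units and restricts to the identity on the base $G$. Smoothness in both directions follows because the distinguished preimage depends smoothly on $h$ (it is a section of the source map $H(\overset{\rightarrow}{B})\to G$ restricted to the unit submanifold $M\subset G$, pulled back along $\Phi$) and right translation is smooth.

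Part ii) is then essentially tautological: under $\Psi$, the upper horizontal map $\pi$ in diagram \eqref{eqn:quot} reads
\[
\pi\bigl(\Psi(h,g)\bigr)=\pi(\xi_h\cdot g)=\pi(\xi_h)=h,
\]
since $\pi$ is constant on $G$-orbits. The main obstacle I anticipate is not the algebraic verification above but the smoothness of the distinguished preimage assignment $h\mapsto \xi_h$, which requires unpacking the manifold structure on $H(\overset{\rightarrow}{B})$ provided by the Moerdijk--Mr\v{c}un construction and checking that the map $M\to H(\overset{\rightarrow}{B})$ given by picking the unit-source preimage is a smooth section of $\pi$ over the unit submanifold of $G$; once this is in place, everything else smoothly extends by right translation.
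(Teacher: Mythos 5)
The paper does not actually prove this theorem: it is imported verbatim from \cite[\S 2.1]{AZ4}, so there is no in-paper proof to compare yours against. On its own merits, your argument is correct and is the natural one given the surrounding text: the map $\Psi(h,g)=\xi_h\cdot g$ built from the distinguished (unit-source) preimage is well defined, matches sources and targets, is bijective by the principality of the $G$-action, and intertwines compositions precisely because $(\xi_{h_2}\cdot\Phi(h_1))\,\xi_{h_1}$ is the distinguished preimage of $h_2h_1$ (the fact already used in Step 3 of Thm.~\ref{thm:main}, which is established independently of the present statement, so there is no circularity); part ii) is then immediate from $G$-invariance of $\pi$. The one place where you only gesture is the smoothness of $h\mapsto\xi_h$, and your parenthetical description of it (``a section of the source map restricted to the unit submanifold, pulled back along $\Phi$'') is muddled: the clean statement is that $\bs^{-1}(1_M)\subset H(\overset{\rightarrow}{B})$ is a submanifold (the source map of $H(\overset{\rightarrow}{B})\rightrightarrows G$ being a submersion) meeting each $G$-orbit transversally in exactly one point, so that $\pi$ restricts to a diffeomorphism $\bs^{-1}(1_M)\to H^G(B)$ whose inverse is $h\mapsto\xi_h$. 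With that filled in, $\Psi$ and its inverse $\eta\mapsto(\pi(\eta),\bs(\eta))$ are visibly smooth and the proof is complete. Two cosmetic points: right translation by $g_1$ is a groupoid isomorphism between the restrictions of $H(\overset{\rightarrow}{B})$ to the source fibers $\bs^{-1}(\bt(g_1))$ and $\bs^{-1}(\bs(g_1))$, not an automorphism of the whole groupoid; and in your displayed computation the same symbol $\cdot$ denotes both the $G$-action and groupoid composition, which is worth disambiguating.
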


 \begin{remark}\label{rem:holbi}
Consider a regular foliation on a manifold $M$, {tangent to an involutive distribution $F$}, and a choice of slice $S_x$ at every point $x$ of $M$. Given an element $h$ of the holonomy groupoid $H(F)\rightrightarrows M$, the corresponding holonomy transformation $S_{\bs(h)}\to S_{\bt(h)}$ can be described as follows: take any   bisection of $H(F)$ through $h$ so that the diffeomorphism it induces on (open subsets of) $M$ maps $S_{\bs(h)}$ to $S_{\bt(h)}$, and restrict it to $S_{\bs(h)}$. Notice that while such a bisection is not unique, the germ of its restriction to $S_{\bs(h)}$ is unique. Indeed, there is a unique germ of section of $\bs|_{S_{\bs(h)}}$ through $h$ which maps to $S_{\bt(h)}$ under the target map, as a consequence of the fact that the isotropy groups of $H(F)$ are discrete. \end{remark}

Thanks to the two ingredients above, we can  give the following description of the morphism $\chi$ 
without mentioning explicitly $H(\rar{B})$. (To check this, follow the proof of Thm.~\ref{thm:main}.) 

\bigskip

\begin{mdframed}
\begin{itemize}
\item Take $h\in H^G(B)$. 
\item Consider $(h,1_x)\in H^{G}({B})\times_{\bs_H,\bt}G$, where $x=\bs_H(h)$. 
\item Any bisection
  through $(h,1_x)$ is of the form $\{(\sigma(g),g): g\in G\}$ for a map $$\sigma \colon G\to H^G(B)$$ with $\bs_H\circ \sigma=\bt$ and $\sigma(1_x)=h$ (defined near $1_x$). 
 The induced diffeomorphism of (open subsets of) $G$ is $g\mapsto \Phi(\sigma(g))\cdot g$. Now choose $\sigma$ so that the induced diffeomorphism maps  $S_{1_{x}}$  to $S_{\Phi(h)}$. Notice that its restriction to $S_{1_{x}}$ is precisely
$\Xi (h,1_x)\colon  S_{1_{x}}\to S_{\Phi(h)}$,  by construction and by Remark~\ref{rem:holbi}.
\item Hence  $\chi(h)$ reads
 $$\chi(h)\colon S_{\bs_H(h)}\to S_{ \bt_H(h)}, \;\;g\mapsto \Phi(\sigma(g))\cdot g \cdot\Phi(h)^{-1}.$$
  \end{itemize}
\end{mdframed}

\smallskip
\begin{example}[Lie subalgebras]\label{ex:lieal}
Let  $\g$ be a Lie algebra, $G$ an integrating connected Lie group, and $\h$ a Lie subalgebra. Then $H^G(\h)=:H$ is the connected Lie subgroup of $G$ with Lie algebra $\h$, and $\Phi\colon H \to G$ the inclusion.
Since $M$ is a point, namely the unit $e$, we only need to fix a slice $S_e\subset G$ through $e$ transverse to $H$.

We now describe $\chi$ following the above steps.
Take $h\in H$, and  
consider the element $(h,e)$ of the transformation groupoid $H\times G\rightrightarrows G$, {where the action of $H$ on $G$ is by left multiplication}. A bisection through this element is constructed out of a locally defined map $\sigma\colon G\to H$ with $\sigma(e)=h$; choose $\sigma$ so that the induced diffeomorphism $g\mapsto \sigma(g)\cdot g$ between neighborhoods (of $e$ and $h$ respectively) in $G$ has the property that it maps $S_e$ to $R_h(S_e)$. Then we have
$$\chi(h)\colon S_{e}\to S_{e}, \;g\mapsto  \sigma(g) \cdot g \cdot h ^{-1}.$$

As $\sigma$ it is tempting to choose the map $g\mapsto g\cdot h \cdot g^{-1}$, because then $g\mapsto \sigma(g)\cdot g=gh$ does send $S_e$ to $R_h(S_e)$. However this candidate for $\sigma$ does not take values in $H$ in general. It does when $H$ is a normal subgroup, and in that case we obtain $\chi(h)=Id_{S_e}$ for all $h$. In Remark~\ref{rem:normal} we will  refine this observation with a slightly different proof.
 \end{example}

%%%
\section{A description of $\chi$ via conjugation}\label{sec:conj}

As in the previous section, let $A$ be an integrable Lie algebroid over a manifold $M$,    $G\rightrightarrows M$ a Lie groupoid integrating $A$, and $B$ be a wide Lie subalgebroid of $A$. Recall that we have a canonical Lie groupoid morphism $\Phi\colon H^G(B)\to G$. 
Choose smoothly, for every $x\in M$, a slice $S_x$ inside the source fiber $\bs^{-1}(x)$ 
transverse to $B_x$.

Here, for all $h\in H^G(B)$, we construct a map $\chi^{conj}	(h)$ between slices, and in Prop.~\ref{prop:agree}  we compare it with  the map $\chi(h)\colon S_{\bs_H(h)}\to S_{ \bt_H(h)}$ defined in Thm.~\ref{thm:main}. The definition of $\chi^{conj}(h)$ is as follows.

\bigskip
\begin{mdframed}
\begin{itemize}
\item Take $h\in H^G(B)$. 
\item Take a bisection $\alpha_H$ of $H^G(B)$ through $h$
\item Denote $\alpha:=\Phi(\alpha_H)$, a bisection of $G$ through $\Phi(h)$.
Consider $L_{\alpha}$,  the left multiplication by $\alpha$ restricted to 
$S_{\bs_H(h)}$.
\item Define  $\chi^{conj}(h):= R_{\Phi(h)^{-1}}\circ L_{\alpha}$.
Since $S_{\bs_H(h)}$ is contained in a source fiber,  this map is just conjugation by the bisection: 
$$\chi^{conj}(h)=R_{\alpha^{-1}}\circ L_{\alpha},$$
 where $\alpha^{-1}$ is the bisection inverse to $\alpha$.
   \end{itemize}
 \end{mdframed}
 
 \bigskip
Spelled out, the map reads
 \begin{align}\label{eq:formualconj}
\chi^{conj}(h)\colon S_{\bs_H(h)}&\to R_{\Phi(h)^{-1}} L_{\alpha}(S_{\bs_H(h)})\\
\nonumber g &\mapsto  (\Phi\circ \alpha_H\circ\bt)(g)\cdot g \cdot\Phi(h)^{-1}. 
\end{align}

%%%% map \chi
\begin{figure}[h]
 \begin{tikzpicture}[scale=1.0]
 \draw  (-3,0) -- (0,3) -- (3,0) -- (0,-3) -- cycle;
  \draw (-3,0)  -- (3,0);
    \draw[dashed] (-2.5,-0.5)  -- (0.5,2.5); %t fiber
  \draw[dashed] (2,-1)  -- (-1,2); %s fiber

 \def\ra{0.15}%%% for S segments 
  
\node [] at (1,0) {$\bullet$};
\node [ above right]  at (1,0) {$\bs_H(h)$};
\draw[thick,red] (1+\ra,0-\ra)  -- (1-\ra,0+\ra);

\node [] at (-0.5,1.5) {$\bullet$};
\node [ above ] at (-0.5,1.7) {$\;\Phi(h)$};
%\draw[thick,red] (-0.5+\ra,1.5-\ra)  -- (-0.5-\ra,1.5+\ra);

\draw [blue, thick] (-0.5,1.5) to [out=0,in=120] (-0.5+0.4,1.5-0.1);
\draw [blue,thick] (-0.5,1.5) to [out=180,in=300] (-0.5-0.4,1.5+0.1);
\node [below, right] at (-0.5+0.4,1.5-0.1) {$\;\alpha$};
         
\node [] at (-0.5,-1.5) {$\bullet$};
\node [ right] at (-0.5,-1.5) {$\Phi(h)^{-1}$};
%\draw[thick,red] (-0.5+\ra,-1.5-\ra)  -- (-0.5-\ra,-1.5+\ra);

\node [] at (-2,0) {$\bullet$};
\node [ above right] at (-2,0) {$\bt_H(h)$};
\draw[thick,red] (-2+\ra,0-\ra)  -- (-2-\ra,0+\ra);

\draw [->,thick,   red] (1-0.1,0-0.1) to [out=210,in=-30] (-2+0.2,0-0.1);
\node [red] at (-0.5,-0.8) {$\chi^{conj}(h)$}; 	
\end{tikzpicture}
\caption{Given $h\in H^G(B)$, the construction of the map $\chi^{conj}(h)$.
The slices $S$ are zero-dimensional, but  are depicted as short red segments.}
\end{figure}
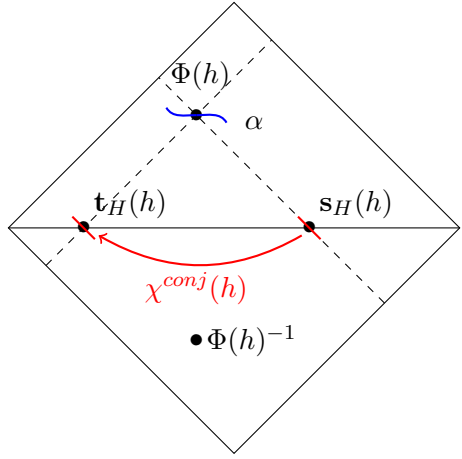

 \begin{remark}
  To construct the map $\chi^{conj}(h)$, we  
choose a bisection $\alpha_H$ in $H^G(B)\rightrightarrows M$, rather than one in $H(\overset{\rightarrow}{B})\rightrightarrows  G$ as we did in \S\ref{subsec:constr}. 
The map -- in particular its codomain -- depends on the bisection.
However  this dependence is immaterial; see Prop.~\ref{prop:agree} below and its proof.
\end{remark}

In the third item above, in general there is no bisection $\alpha_H$ so that  
$L_{\alpha}(S_{\bs_H(h)})$ equals  $S_{\Phi(h)}:=R_{\Phi(h)}(S_{\bt_H(h)})$, due to the fact that  the former slice is obtained by left-multiplication and   the latter slice by  right-multiplication. (Notice that both slices are transverse to $\overset{\rightarrow}{B}_{\Phi(h)}$, as a consequence of the fact that $\alpha$ is the image under $\Phi$ of a bisection of the Lie groupoid $H^G(B)$.)
Consequently, the same holds
 for the slices $R_{\Phi(h)^{-1}} L_{\alpha}(S_{\bs_H(h)})$ and $S_{\bt_H(h)}$.  
However, since both slices pass through the same point $1_{\bt_H(h)}$ and are transverse to the distribution $\overset{\rightarrow}{B}$, there is a canonical identification $\Psi_{\bt_H(h)}$ between $R_{\Phi(h)^{-1}} L_{\alpha}(S_{\bs_H(h)})$ and $S_{\bt_H(h)}$, obtained ``sliding'' along the leaves of the distribution.

\begin{proposition}\label{prop:agree}
For all $h\in H^G(B)$, the maps $\chi(h)$ and $\chi^{conj}(h)$ agree 
under the canonical identification  $\Psi_{\bt_H(h)}$ between their codomains.
\end{proposition}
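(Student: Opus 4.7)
The plan is to reduce both maps to the same holonomy germ $\Xi(h,1_x)$ of $\overset{\rightarrow}{B}$ on $G$ at $(h,1_x)$ (where $x=\bs_H(h)$), by computing it with a bisection arising from $\alpha_H$, and then to recover the definition of $\chi$ by post-composing with $R_{\Phi(h)^{-1}}$. Throughout, I would rely on the transformation-groupoid identification $H(\overset{\rightarrow}{B})\cong H^{G}(B)\times_{\bs_H,\bt}G$ from Thm.~\ref{thm:MoeMrcreg}.

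First, the bisection $\alpha_H$ of $H^G(B)$ through $h$ induces, via Thm.~\ref{thm:MoeMrcreg}, the bisection $g\mapsto (\alpha_H(\bt(g)),g)$ of $H(\overset{\rightarrow}{B})$ through $(h,1_x)$; its associated local diffeomorphism of $G$ is precisely left translation $L_\alpha$ by $\alpha=\Phi(\alpha_H)$. Restricting to $S_{1_x}\subset\bs^{-1}(x)$ yields the germ $L_\alpha|_{S_{1_x}}\colon S_{1_x}\to L_\alpha(S_{1_x})$, whose codomain is a slice through $\Phi(h)$ transverse to $\overset{\rightarrow}{B}$ because $\alpha$ is the $\Phi$-image of a bisection of $H^G(B)$. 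Since both $L_\alpha(S_{1_x})$ and $S_{\Phi(h)}=R_{\Phi(h)}(S_{\bt_H(h)})$ are transversal slices through $\Phi(h)$, they are canonically identified by sliding along leaves, yielding $\Psi_{\Phi(h)}$. Arguing as in Rem.~\ref{rem:holbi} applied to the foliation $\overset{\rightarrow}{B}$ on $G$ — the image of $g\in S_{1_x}$ under the holonomy is the unique point of the target transversal lying on the leaf of $g$ near $\Phi(h)$ — one concludes
\begin{equation*}
\Psi_{\Phi(h)}\circ L_\alpha|_{S_{1_x}}=\Xi(h,1_x).
\end{equation*}

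Second, I would post-compose with $R_{\Phi(h)^{-1}}$. Right-invariance of $\overset{\rightarrow}{B}$ implies that $R_{\Phi(h)^{-1}}$ maps the leaves of $\overset{\rightarrow}{B}|_{\bs^{-1}(x)}$ bijectively to those of $\overset{\rightarrow}{B}|_{\bs^{-1}(\bt_H(h))}$, and consequently intertwines the sliding identifications:
\begin{equation*}
R_{\Phi(h)^{-1}}\circ \Psi_{\Phi(h)} = \Psi_{\bt_H(h)} \circ R_{\Phi(h)^{-1}}.
\end{equation*}
Combining with the previous step and the defining formulas \eqref{eq:defchi} and \eqref{eq:formualconj} gives
\begin{equation*}
\chi(h)= R_{\Phi(h)^{-1}}\circ \Xi(h,1_x)= \Psi_{\bt_H(h)}\circ R_{\Phi(h)^{-1}}\circ L_\alpha|_{S_{1_x}}=\Psi_{\bt_H(h)}\circ \chi^{conj}(h),
\end{equation*}
which is the claimed agreement under $\Psi_{\bt_H(h)}$.

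The main obstacle I expect is the bookkeeping of the four slices $S_{1_x}$, $L_\alpha(S_{1_x})$, $S_{\Phi(h)}$, $S_{\bt_H(h)}$ — in particular the fact that $L_\alpha(S_{1_x})\neq S_{\Phi(h)}$ in general, since the former is produced by left multiplication while the latter is produced by right multiplication — together with the verification that the sliding identifications genuinely commute with right translation. Both issues ultimately reduce to the right-invariance of $\overset{\rightarrow}{B}$ and the uniqueness of germs of holonomy transformations of a regular foliation.
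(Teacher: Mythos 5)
Your proof is correct and follows essentially the same route as the paper: lift $\alpha_H$ to a bisection of the transformation groupoid $H^{G}(B)\times_{\bs_H,\bt}G\cong H(\overset{\rightarrow}{B})$ carrying $L_\alpha$, identify its restriction to $S_{1_x}$ with $\Xi(h,1_x)$ via Remark~\ref{rem:holbi} and the sliding identification, then post-compose with $R_{\Phi(h)^{-1}}$. Your explicit intertwining relation $R_{\Phi(h)^{-1}}\circ\Psi_{\Phi(h)}=\Psi_{\bt_H(h)}\circ R_{\Phi(h)^{-1}}$ is a welcome clarification of a step the paper leaves implicit, but it is not a different argument.
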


{As a consequence of this proposition and eq. \eqref{eq:formualconj}, the action $\chi$ of $H^G(B)$ on the fiber bundle $\coprod_{x\in M}S_x$ is equivalent to its action on the orbit space $H_{loc}\backslash G_{loc}$ described in the introduction.}

  \begin{proof}
Take $h\in H^G(B)$, and a bisection $\alpha_H$ of $H^G(B)$ through $h$. We will express $\chi(h)$ using these data, following the description of $\chi$ given in 
the proof of Thm.~\ref{thm:main}, and using  several arguments encountered in 
\S~\ref{subsec:constr}.

 Take the preimage of 
%(the image of)
 the bisection $\alpha_H$ under the map $\pi\colon H^{G}({B})\times_{\bs_H,\bt}G\to H^{G}(B)$ (see Thm.~\ref{thm:MoeMrcreg}). We obtain the bisection
 $\{(\alpha_H(\bt(g)),g):g\in G\}$ through $(h,1_{\bs_H(h)})$. 
 The diffeomorphism carried by this bisection is 
\begin{equation*}
%\label{eq:diffeo}
g\mapsto \Phi((\alpha_H\circ\bt)(g)))\cdot g,
\end{equation*}
which is just the left translation  by the bisection 
$\alpha:=\Phi\circ \alpha_H$ of $G$.

Denote by $L_{\alpha}$
the   restriction  to $S_{1_{\bs_H(h)}}$ of the left translation  by $\alpha$; it does not map to 
$S_{\Phi(h)}$ in general. However $L_{\alpha}$
 maps to 
a slice through $\Phi(h)$ which is canonically identified with
$S_{\Phi(h)}$, and under this identification it agrees with the holonomy diffeomorphism 
$\Xi (h,1_{\bs_H(h)})\colon  S_{1_{\bs_H(h)}}\to S_{\Phi(h)}$. This follows from the fact that  $H^{G}({B})\times_{\bs_H,\bt}G$ is isomorphic to the holonomy groupoid $H(\overset{\rightarrow}{B})$ (by Thm.~\ref{thm:MoeMrcreg}), and from  Remark~\ref{rem:holbi} about the characterization of holonomy transformations in terms of bisections.

Eq.\eqref{eq:defchi} shows that under the canonical identification  $\Psi_{\bt_H(h)}$, composing  $L_{\alpha}$ with $R_{\Phi(h)^{-1}}$
yields $\chi(h)$.
On the other hand, 
the composition  $R_{\Phi(h)^{-1}}\circ L_{\alpha}$ is exactly $\chi^{conj}(h)$, by definition.
\end{proof}

\begin{example}[Lie subalgebras]\label{ex:liealdescr}
Let  $\g$ be a Lie algebra, $G$ an integrating connected Lie group, and $\h$ a Lie subalgebra. Then $H^G(\h)=:H$ is the connected Lie subgroup of $G$ with Lie algebra $\h$. Fix a slice $S_e\subset G$ through $e$ transverse to $H$.

Now take $h\in H$.  As $M$ is a point, the bisections $\alpha_H$ and $\alpha$ coincide with $h$. Thus $$\chi^{conj}(h)\colon S_e\to h\cdot S_e \cdot h^{-1},\;\; g\mapsto h\cdot g \cdot h^{-1}$$
is conjugation by $h$, restricted to $S_e$.
Notice that in general there is no\footnote{Indeed, infinitesimally this conditions corresponds to the existence of a  subspace of $\g$ which is transverse to $\h$ and which is preserved by $[\h,\cdot]$.} slice $S_e$ that is invariant under conjugation by all elements of $H$. 

A side-remark is that with this at hand, we can describe concretely the map $\sigma|_{S_e}\colon S_e\to H$ used to construct $\chi$  in Ex.~\ref{ex:lieal}. We know that $h\cdot S_e \cdot h^{-1}$ is a slice through $e$ transverse to $H$,
and therefore transverse to the right-translates of $H$ (which are the leaves of $\overset{\rightarrow}{\h}$).
Hence for every $g\in S_e$ there a unique element $\varepsilon(g)\in H$ ``close'' to $e$ such that $\varepsilon(g)\cdot  h g  h^{-1}\in S_e$. We have $\sigma(g)=\varepsilon(g) h$.
 \end{example}

\begin{remark}\label{rem:normal}
Let $\g$ be a Lie algebra, $\h$ a Lie subalgebra, and assume the notation of Ex.~\ref{ex:lieal}.  
We show that $\chi\colon H\to {GermDiff\,}(S_e,S_e)$ is trivial if and only if $H$ is a normal subgroup of $G$.

We do so using Ex.~\ref{ex:liealdescr}.
For all $h\in H$ and $g\in S_e$,  the following are equivalent:

- the elements $g$ and $(\chi^{conj}(h))(g)=h\cdot g \cdot h^{-1}$ lie in the same leaf of $\overset{\rightarrow}{\h}$, 

- the element $(h\cdot g \cdot h^{-1})\cdot g^{-1}$ lies in 
 $H$,
 
 - the element   $g \cdot h^{-1}\cdot g^{-1}$ lies in $H$.
 
 Hence, using Prop.~\ref{prop:agree}, we see that $\chi(h)=Id_{S_e}$ for all $h\in H$ if{f} $H$ is invariant under conjugation by all elements of $S_e$. In that case $\h$ is invariant under $Ad_g$  for all $g\in S_e$, thus 
 $\h$  is a Lie ideal in $\g$, and thus   $H$ is a normal subgroup of $G$.
 \end{remark}

%%%%%

\section{Relation to the Bott connection}\label{sec:Bott}

Let $B$ be a wide Lie subalgebroid of the Lie algebroid $A$ over $M$. There is a flat $B$-connection on $A/B$, defined by $$\nabla_b \underline{a}=\underline{[b,a]},$$ where $b\in \Gamma(B)$, $a\in \Gamma(A)$ and $\underline{a}:=a \text{ mod } B$ \cite[Example 4]{VanEst}.
It generalizes the well-known Bott connection   (the case $A=TM$),
and it plays an important role in \cite{AtHomLeibniz} where the Atiyah class for the pair $(A,B)$ is explored.
The map $\nabla$ is a \emph{Lie algebroid representation} of  $B$ on the vector bundle $A/B$. In other words, it is   a Lie algebroid morphism   $B \to Der(A/B)$
into  the Lie algebroid whose sections are covariant differential operators on the vector bundle $A/B$.
One of its applications is that the 1-cocycles of the Lie algebroid $B$ with values in this representation are exactly the first order deformations of $B$ to Lie subalgebroids of $A$ (this generalizes an observation by Heitsch for foliations \cite{Heitsch}).

In this section we show that the maps $\chi$ and $\chi^{conj}$ introduced earlier, upon a suitable differentiation, yield $\nabla$.
As earlier, let $G\rightrightarrows M$ be a Lie groupoid integrating $A$,  and recall that we have a canonical Lie groupoid morphism $\Phi\colon H^G(B)\to G$. 
Choose smoothly, for every $x\in M$, a slice $S_x$ inside the source fiber $\bs^{-1}(x)$ 
transverse to $B_x$.

From the \emph{Lie groupoid action} $\chi$ (see Thm.~\ref{thm:main}), by differentiating germs of diffeomorphisms, we obtain a \emph{Lie groupoid representation}
$\Psi\colon H^G(B)\to \textrm{Iso}(A/B)$, where the latter is the Lie groupoid  consisting of isomorphisms between the fibers of the vector bundle $A/B\to M$.
The representation is  given by 
$$\Psi(h):=(\chi(h))_*\colon (A/B)_{\bs_H(h)}\to (A/B)_{\bt_H(h)},$$
where we use the canonical identification $T_xS_x\cong (A/B)_x$.
Since $\chi$ and  $\chi^{conj}$ agree upon a canonical identification between slices, we also have $(\chi^{conj}(h))_*=(\chi(h))_*=\Psi(h)$ for all $h$.

The main result of this section is the following proposition, which we  prove in two different ways.
\begin{proposition}\label{prop:Bott}
  The Lie groupoid representation $\Psi\colon H^G(B)\to Iso(A/B)$ differentiates to the Lie algebroid representation $\nabla\colon B\to Der(A/B)$.
\end{proposition}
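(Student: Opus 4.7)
My plan is to reduce the statement to the well-known differentiation of foliation holonomy to the Bott connection, applied to the regular foliation $\rar{B}$ on $G$. Recall from Step~2 in the proof of Thm.~\ref{thm:main} that $\chi(h)=R_{\Phi(h)^{-1}}\circ\Xi(\xi)$, where $\xi\in H(\rar{B})$ is the distinguished lift of $h$ with source $1_{\bs_H(h)}$. By Example~\ref{ex:fol} applied to the foliation $\rar{B}$ on $G$, differentiating $\Xi$ recovers the classical Bott connection $\nabla^{\rar{B}}$ on $\ker(\bs_*)/\rar{B}$. Restricting along the identity section $M\hookrightarrow G$ produces a $B$-representation on $A/B$ that I will check equals $\nabla$. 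The compatibility on the nose is a direct consequence of the standard identity $[\vec{b},\vec{a}]=\overrightarrow{[b,a]}$ for right-invariant vector fields, evaluated at $1_x$.

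Concretely, fix $b\in\Gamma(B)$ and $\bar{a}\in\Gamma(A/B)$, and pick a \emph{slice-adapted} lift $a\in\Gamma(A)$, meaning $a(y)\in T_{1_y}S_y$ for all $y\in M$; this is possible because $A_y=B_y\oplus T_{1_y}S_y$. Take a curve $h_t$ in $H^G(B)$ with $h_0=1_x$ and $\dot{h}_0=b(x)$; its distinguished lift $\xi_t\in H(\rar{B})$ starts at $1_{1_x}$ with derivative $b(x)\in\rar{B}_{1_x}=B_x$. Slice-adaptedness guarantees that $(R_{\Phi(h_t)})_*a(y_t)=\vec{a}(\Phi(h_t))$ lies in $T_{\Phi(h_t)}S_{\Phi(h_t)}$, so under the canonical identifications $T_{1_y}S_y\cong (A/B)_y$ one has
\[
\Psi(h_t)^{-1}\bar{a}(y_t)\;=\;\Xi(\xi_t)_*^{-1}\bigl(\vec{a}(\Phi(h_t))\bigr).
\]
Differentiating at $t=0$, the right-hand side equals $\nabla^{\rar{B}}_{b(x)}[\,\underline{\vec{a}}\,]=\underline{[\vec{b},\vec{a}](1_x)}=\underline{[b,a](x)}$ by the parallel-transport characterization of $\nabla^{\rar{B}}$ and the identity $[\vec{b},\vec{a}]=\overrightarrow{[b,a]}$. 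This is precisely $\nabla_b\bar{a}(x)$, so $\Psi_*=\nabla$.

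The main subtlety is keeping careful track of the canonical identifications—between the slice tangent spaces and the fibers of $A/B$, between $B_x$ and $\rar{B}_{1_x}$, and between sections of $\ker(\bs_*)/\rar{B}$ on $G$ and sections of $A/B$ on $M$—so that the differentiation at $t=0$ does not acquire spurious contributions from the moving basepoint $y_t$. The slice-adapted choice of lift is designed precisely to prevent this. As an alternative proof, one may work directly from $\chi^{conj}$ and Eq.~\eqref{eq:formualconj}: conjugation by the bisection $\alpha(t)=\Phi(\alpha_H(t))$ with $\dot\alpha(0)=b$ is a $1$-parameter family of groupoid automorphisms $\tilde\phi_t$ of $G$ whose infinitesimal generator is a multiplicative vector field restricting to $b$ along the unit section, and the induced endomorphism of $A/B$ at $x$ is $[b,\,\cdot\,]\bmod B=\nabla_b|_x$, in the style of the homogeneous-space arguments of \cite{LGVoglaire}.
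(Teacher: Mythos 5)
Your main argument is essentially the paper's second proof: reduce to the regular foliation $\rar{B}$ on $G$, invoke the known fact that foliation holonomy differentiates to the Bott connection of that foliation, and descend to $M$ via the right-invariance identity $[\vec{b},\vec{a}]=\overrightarrow{[b,a]}$ (your slice-adapted lift neatly makes explicit the paper's claim that $\Psi(h)$ is the push-forward of $\widehat{\Psi}(\xi)$ under right translation). Your alternative sketch via $\chi^{conj}$ and exponentiated bisections is exactly the paper's first proof, which rests on the fact (Remark~\ref{rem:wellknown}) that conjugation by the bisection $\exp(\epsilon b)$ induces $e^{ad_{\epsilon b}}$ on $A$.
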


\begin{example}[Lie subalgebras]\label{ex:liealBott}
Let  $A=\g$ be a Lie algebra, $G$ an integrating connected Lie group,  $B=\h$ a Lie subalgebra, and
$H$ the connected Lie subgroup of $G$ with Lie algebra $\h$. By definition, $\nabla \colon \h\to End(
{\g}/{\h})$ is induced by the Lie bracket of $\g$.
Since  $\chi^{conj}$ is given by conjugation on a slice $S_e\subset G$ transverse to $H$
(see Ex.~\ref{ex:liealdescr}), Prop.~\ref{prop:Bott} implies that  $\Psi$ is the 
 representation of $H$ on ${\g}/{\h}$ induced by the adjoint representation $Ad$. (Recall that $Ad$ is a representation of $G$  on $\g$, and its restriction to $H$ descends to the quotient space ${\g}/{\h}$).
 
Summarizing this analogously to how we did for foliations in the introduction, we have:
\begin{enumerate}
\item  [i)] A Lie group action $\chi$ of $H$ on $S_e$ (induced by conjugation upon an identification of slices),
\item [ii)] A Lie group representation   of $H$ on ${\g}/{\h}$ (induced by $Ad$),
\item [iii)] A Lie algebra representation of $\h$ on ${\g}/{\h}$ (induced by $ad$).
\end{enumerate} 
 
\end{example}
  
\subsection{A proof {of Proposition~\ref{prop:Bott}} in terms of $\chi^{conj}$}\label{subsec:firstproof}

Recall from \S~\ref{sec:conj} that
$\chi^{conj}(h)=R_{\alpha^{-1}}\circ L_{\alpha}$, with the notation introduced there.

Recall also the following fact, which is essentially the content of \cite[Prop. 3.7.1]{MK2}.
\begin{remark}\label{rem:wellknown}
Let $a$ be a section of the Lie algebroid $A$, denote by $\alpha$ the global bisection of $G$ obtained exponentiating $a$ (i.e. the image of $1_M$ under the time-1 flow of the vector field $\rar{a}$; {the time-1 flow is well-defined if, for instance, the image of $a$ under the anchor map is a complete vector field on $M$ \cite[Thm. 3.6.4]{MK2}}). Then the conjugation $R_{\alpha^{-1}}\circ L_{\alpha}$ is a Lie groupoid  automorphism of $G$, whose associated Lie algebroid automorphism is $e^{ad_a}$ (defined as the Lie algebroid automorphism integrating the Lie algebroid derivation $ad_a:=[a,\cdot]$)
\end{remark}

\begin{proof}[First proof of Proposition~\ref{prop:Bott}]
Take a section $b$ of $B$, and for all $\epsilon\in [0,1]$ consider the bisection 
$\alpha_H^{\epsilon}$ of $H^G(B)\rightrightarrows M$ obtained exponentiating the   section $\epsilon b$ of $B$. We have $\frac{d}{d\epsilon}|_0 \alpha_H^{\epsilon}=b$, where to take the derivative  we view bisections as sections of the source map. Denote $\alpha^{\epsilon}=\Phi(\alpha^{\epsilon}_H)$, which is simply  
  the bisection  of $G$ obtained exponentiating $\epsilon b$ viewed as a section of $A$.
 We have
$$\Psi(\alpha_H^{\epsilon})=(\chi^{conj}(\alpha^{\epsilon}))_*
=e^{ad_{  \epsilon b}}$$
using Remark~\ref{rem:wellknown} in the second equality, and denoting by
the same symbol $e^{ad_{  \epsilon b}}$ the map induced on the quotient $A/B$. Hence
$$\left.\frac{d}{d\epsilon}\right|_0\Psi(\alpha_H^{\epsilon})=\left.\frac{d}{d\epsilon}\right|_0e^{ad_{  \epsilon b}}=\nabla_b.$$
\end{proof}

\subsection{A proof {of Proposition~\ref{prop:Bott}} in terms of $\chi$}

 We now prove Prop.~\ref{prop:Bott} by reducing it to the case  of foliations on the Lie groupoid $G$.

\begin{proof}[Second proof of Proposition~\ref{prop:Bott}]
On $G$ we have a right-invariant foliation $\rar{B}$.
Recall from the proof of Thm.~\ref{thm:main}.
that we denoted by $\Xi$ the  holonomy map of (the restriction to $\bs$-fibers) of this foliation.
Taking derivatives of germs of diffeomorphisms,  one obtains the Lie groupoid representation
$$\widehat{\Psi}\colon H(\rar{B})\to Iso (\rar{A}/\rar{B}),\;\; \Psi(\xi)=(\Xi(\xi))_*$$
on the normal bundle of the foliation. By \cite[Lemma 3.11]{AZ2},
 its corresponding Lie algebroid representation is 
given by the Bott connection  of the foliation, which we denote by $\widehat{\nabla}$.

Consider the bundle map  $\ker(\bs_*)=\rar{A}\to A$ covering $\bt$ defined by  $v\in \ker(\bs_*)_g\mapsto (R_{g^{-1}})_*v$, and the induced bundle map $R_*\colon \rar{A}/\rar{B}\to A/B$.
The following two claims relate the Bott connection on $G$ with the  one on $M$, and similarly for the Lie groupoid representations.

\emph{Claim 1: For all sections $b$ of $B$ and $a$ of $A$:
$$\nabla_b(a \text{ mod } B)=R_*\left(\widehat{\nabla}_{\rar{b}}(\rar{a} \text{ mod } \rar{B})\right).$$
}

\emph{
Claim 2: For all $h\in H^G(B)$ we have
$$\Psi(h)=R_*\circ \widehat{\Psi}(\xi)$$
where $\xi\in H(\rar{B})$ is the  unique preimage of $h$ under $\pi$ with source equal to $1_{\bs_H(h)}$.}

We will not prove the claims. We just mention that Claim 1 -- in particular the fact that the section of $\rar{A}/\rar{B}$ appearing there is $R$-projectable --
follows from the definition of the Lie bracket on $\Gamma(A)$ in terms of that of right-invariant vector fields on $G$. Also, in Claim 2 the expression $\Psi(h):=(\chi(h))_*$ is computed directly using eq.~\eqref{eq:defchi}.

Now let $\alpha_H^{\epsilon}$ be a family of sections of the source map of $H^G(B)$ going through the unit section at $\epsilon=0$. Denote by $\xi^{\epsilon}$ the bisection of $H(\rar{B})$ obtained as the  preimage of 
%(the image of)
 $\alpha_H^{\epsilon}$ under $\pi$, as in the proof of Prop.~\ref{prop:agree}.
The bisection $\xi^{\epsilon}$ is invariant under the $G$-action by right translations, and $\left.\frac{d}{d\epsilon}\right|_0  \xi^{\epsilon} \in \Gamma(\rar{B})$ is the right-invariant extension of $\left.\frac{d}{d\epsilon}\right|_0  \alpha_H^{\epsilon} \in \Gamma(B)$. Let $a\in \Gamma(A)$. Then
 \begin{align*}
  \left.\frac{d}{d\epsilon}\right|_0(\Psi(\alpha_H^{\epsilon}))\underline{a}
  =\left.\frac{d}{d\epsilon}\right|_0 (R_*\circ \widehat{\Psi}(\xi^{\epsilon}))\rar{(\underline{a})}
  &= R_*\left(( \left.\frac{d}{d\epsilon}\right|_0 \widehat{\Psi}(\xi^{\epsilon})\rar{(\underline{a})}\right)\\
  &=R_* \left({\widehat{\nabla}}_{\left.\frac{d}{d\epsilon}\right|_0  \xi^{\epsilon}}\rar{(\underline{a})}\right)\\
  &=  \nabla_{\left.\frac{d}{d\epsilon}\right|_0  \alpha_H^{\epsilon}}\underline{a}.
\end{align*}
Here in the first equality we used Claim 2 together with eq.~\eqref{eq:rightinv}, in the second the statement from
 \cite[Lemma 3.11]{AZ2}  recalled above, 
 and in the last equality we used Claim 1.
\end{proof}

    \section{The holonomy map $\chi$ for singular subalgebroids}\label{sec:sing}
   
The purpose of this section is to generalize Thm. ~\ref{thm:main} to  singular subalgebroids.  
Let $A$ be an integrable Lie algebroid over $M$. We recall from \cite{AZ3}:
\begin{definition}
  A {\bf singular subalgebroid} of $A$  is a $C^{\infty}(M)$-submodule $\cB$ of 
    $\Gamma_c(A)$  which is locally finitely generated and involutive with respect to the Lie algebroid bracket. Here  $\Gamma_c(A)$ denotes the compactly supported sections of $A$.
\end{definition}
 The two basic examples of singular subalgebroids are the following.
\begin{itemize}
\item[a)] Let $B$ be a wide Lie subalgebroid of $A$. Then $\Gamma_c(B)$
%, the compactly supported sections of $B$, 
is a singular subalgebroid of $A$.
\item[b)] The singular subalgebroids of the tangent bundle $TM$ are exactly the singular foliations as defined in \cite{AndrSk}.
\end{itemize}

Now fix a source connected Lie groupoid $G\rightrightarrows  M$ integrating $A$.
The holonomy groupoid $H^{G}({\cB})\rightrightarrows M$  of a singular subalgebroid was constructed in \cite[\S 3.2]{AZ3}. For wide Lie subalgebroids it yields the minimal integrals over $G$ introduced by Moerdijk-Mr{\v{c}}un \cite{MMRC}, and its construction is based on   the one of Androulidakis-Skandalis for singular foliations \cite{AndrSk}.
   
For singular subalgebroids we have the following result, which for wide Lie subalgebroids reduces to Thm. ~\ref{thm:main}, and for singular foliations to \cite[Thm. 2.7]{AZ2}.

\begin{theorem}\label{thm:mainsing}
Let $A$ be an integrable Lie algebroid over $M$ and $G$ an integrating source connected Lie groupoid. Let $\cB$ be a  singular subalgebroid of $A$.
Choose\footnote{Since the dimension of the slices is not constant in general, this choice can not be done smoothly.}, for every $x\in M$, a slice $S_x$ through $x$ inside the source fiber $\bs^{-1}(x)$ so that $T_xS_x$ is transverse to ${B}_x:=\{\alpha_x:\alpha\in \cB\}$.  

There is a canonical  groupoid morphism  
$$\chi\colon H^G(\cB)\to \coprod_{{x,y}\in M} \frac{\textit{GermDiff\,}(S_x,S_y)}{exp(I_x \overset{\rightarrow}{\cB}_{S_x})}$$
 mapping $h\in  H^G(\cB)$ to an equivalence class of germs of diffeomorphisms
$$S_{\bs_H(h)}\to S_{ \bt_H(h)}.$$
Above, $I_x$ denotes the functions on $G$ vanishing at $x$, and $\overset{\rightarrow}{\cB}$ is the singular foliation on $G$  generated by right-translations of elements of $\cB$.  By $\overset{\rightarrow}{\cB}_{S_x}$ we denote the restriction of $\overset{\rightarrow}{\cB}$ to $S_x$, and
 $exp(I_x \overset{\rightarrow}{\cB}_{S_x})$
is the space of germs at $x$ of time-1 flows of time-dependent vector fields in the ideal 
$I_x \overset{\rightarrow}{\cB}_{S_x}$.
\end{theorem}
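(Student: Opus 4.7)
The plan is to mirror the proof of Thm.~\ref{thm:main}, replacing everywhere the holonomy of a regular foliation by the holonomy of a \emph{singular} foliation in the sense of Androulidakis--Skandalis \cite{AndrSk}. The singular-foliation analogue of $\Xi$ we need is \cite[Thm. 2.7]{AZ2}, which asserts that for a singular foliation $\mathcal{F}$ on a manifold $N$ together with a choice of transverse slice $T_x$ at each $x\in N$, the holonomy groupoid $H(\mathcal{F})$ admits a canonical morphism into the groupoid of germs of diffeomorphisms between slices, \emph{modulo} time-$1$ flows of time-dependent vector fields in $I_x \mathcal{F}_{T_x}$. This is exactly the equivalence relation appearing in the codomain of $\chi$ in the statement.

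First, I would recall from \cite[\S3.2]{AZ3} that $H^G(\cB)$ is constructed as the quotient of $H(\overset{\rightarrow}{\cB})$ by the principal right $G$-action, where $\overset{\rightarrow}{\cB}$ is the singular foliation on $G$ obtained by right-translating $\cB$, tangent to the source fibers. As in Thm.~\ref{thm:main}, the quotient map $\pi\colon H(\overset{\rightarrow}{\cB})\to H^G(\cB)$ covers $\bt\colon G\to M$, and every $h\in H^G(\cB)$ has a distinguished preimage $\xi$ characterized by $\bs(\xi)=1_{\bs_H(h)}$, whose target is $\Phi(h)$. Choosing at every $g\in G$ the slice $S_g:=R_g(S_{\bt(g)})$ transverse to $\overset{\rightarrow}{\cB}_g$ inside the source fiber through $g$, I would then apply \cite[Thm.~2.7]{AZ2} to the restriction of $\overset{\rightarrow}{\cB}$ to each source fiber of $G$ to obtain a morphism
\[
\Xi\colon H(\overset{\rightarrow}{\cB})\to \coprod_{g,g'\in G}\frac{\textit{GermDiff\,}(S_g,S_{g'})}{exp(I_g\overset{\rightarrow}{\cB}_{S_g})}.
\]
I would then set $\chi(h):=R_{\Phi(h)^{-1}}\circ \Xi(\xi)$, in complete analogy with eq.~\eqref{eq:defchi}.

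The key point to verify is the analogue of eq.~\eqref{eq:rightinv} at the level of the quotient: namely that $\Xi(\xi)$ and $\Xi(\xi\cdot g)$ are related by conjugation with $R_g$, and that this relation descends to the quotient by the exponentials of $I_\bullet \overset{\rightarrow}{\cB}_{S_\bullet}$. Right-invariance of $\overset{\rightarrow}{\cB}$ gives $R_g^* (I_{g'}\overset{\rightarrow}{\cB}_{S_{g'}}) = I_{\bt(g)}\overset{\rightarrow}{\cB}_{S_{\bt(g)}}$ (with $g'$ composable with $g$ appropriately), so conjugation by $R_g$ sends representatives of the exponential equivalence relation at one slice to representatives at the other. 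Once this is established, the right-invariance formula
\[
\Xi(\xi)=R_g^{-1}\circ \Xi(\xi\cdot g)\circ R_g
\]
holds modulo the appropriate equivalence, and the computation in Step~3 of the proof of Thm.~\ref{thm:main} goes through verbatim to show that $\chi$ is a groupoid morphism: for composable $h_2,h_1\in H^G(\cB)$, $(\xi_2\cdot\Phi(h_1))\xi_1$ is the distinguished preimage of $h_2h_1$, and the equality $\chi(h_2)\circ\chi(h_1)=\chi(h_2h_1)$ follows.

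The main obstacle is the bookkeeping around the quotient by $exp(I_x\overset{\rightarrow}{\cB}_{S_x})$: one must check that right-translation conjugation is compatible with this equivalence, that $\chi$ is independent of the choice of representative $\xi$ in $H(\overset{\rightarrow}{\cB})$ (in the singular setting, this holonomy is known only up to such exponentials), and that composition of equivalence classes of germs is well-defined. All three amount to the same underlying fact, namely the right-invariance of the ideal $I_\bullet\overset{\rightarrow}{\cB}_{S_\bullet}$, together with the general fact that in the Androulidakis--Skandalis framework, composition of germ-equivalence classes respects the exponential equivalence. Once these compatibilities are in place, there is no further issue, and the statement follows by direct transcription of the proof of Thm.~\ref{thm:main}.
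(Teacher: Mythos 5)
Your proposal is correct and follows essentially the same route as the paper's proof: construct $H^G(\cB)$ as the quotient of $H(\overset{\rightarrow}{\cB})$ by the right $G$-action, apply \cite[Thm. 2.7]{AZ2} to the singular foliation $\overset{\rightarrow}{\cB}$ restricted to source fibers to get $\Xi$, set $\chi(h)=R_{\Phi(h)}^{-1}\circ\Xi(\xi)$ for the distinguished preimage $\xi$, and check that the right-invariance relation \eqref{eq:rightinv} descends to the equivalence classes. The only cosmetic difference is that the paper justifies the descended relation via smooth bisections of $H(\overset{\rightarrow}{\cB})$ and $G$-equivariance of its source and target maps, whereas you argue via right-invariance of the ideal $I_\bullet\overset{\rightarrow}{\cB}_{S_\bullet}$; both hinge on the same right-invariance of $\overset{\rightarrow}{\cB}$.
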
   
 
Our motivation for such a result was outlined in the Introduction.
The following proof 
parallels that of  Thm. ~\ref{thm:main}.   
   
 \begin{proof}
We use the alternative construction of $H^{G}({\cB})\rightrightarrows M$ given in \cite[\S 2.1]{AZ4}, which is as follows:
consider the singular foliation $\overset{\rightarrow}{\cB}$ on $G$. The Lie groupoid $G$ acts on $H(\overset{\rightarrow}{\cB})\rightrightarrows G$, the holonomy groupoid of the singular foliation, and the quotient is exactly $H^{G}({\cB})\rightrightarrows M$. 

{\it Step 1.}
By \cite[Thm. 2.7]{AZ2}, the holonomy groupoid of any singular foliation has a canonical effective ``action'' on the slices transverse to the leaves. It is not an honest action: to an element of the holonomy groupoid it does not
associate a germ of diffeomorphism between the corresponding slices, but just 
an equivalence class thereof. {(It is however independent of the choice of slices, up to isomorphism, by \cite[Lemma A.9]{AZ2}.)}
Applying this to the singular foliation $\overset{\rightarrow}{\cB}$ on $G$, and restricting 
 to the slices $S_g:=(R_g)_*S_{\bt(g)}$ inside the  source fibers, yields an 
 injective groupoid morphism 
 $$\Xi\colon H(\overset{\rightarrow}{\cB}) \to  \coprod_{{g,g'}\in G} \frac{\textit{GermDiff\,}(S_g,S_{g'})}{exp(I_g \overset{\rightarrow}{\cB}_{S_g})}.$$

For all elements $\xi \in  H(\overset{\rightarrow}{\cB}) $, the equivalence classes $\Xi(\xi)$ and $\Xi(\xi\cdot g)$ are related as in~\eqref{eq:rightinv}.
To see this, use that  a representative for $\Xi(\xi)$ is given by 
(a suitable restriction of) the diffeomorphism carried by  a smooth bisection\footnote{Here ``smooth'' is mean w.r.t. the diffeological structure on $H(\overset{\rightarrow}{\cB})$ \cite{AZ4}.}
 of $H(\overset{\rightarrow}{\cB})$ through $g$,
and the fact that the source and target maps of $H(\overset{\rightarrow}{\cB})\rightrightarrows G$ are equivariant w.r.t. the right $G$-action.

{\it Step 2.} 
For every $h\in H^G(\cB)$, there is a 
distinguished preimage $\xi\in H(\overset{\rightarrow}{\cB})$ under $\pi$, namely the unique preimage with source equal to $1_x$,
where $x:={\bs_H(h)}$. (Under the isomorphism of Thm.~\ref{thm:trafogroidreg}, which holds also in the singular case by  \cite[\S 2.1]{AZ4}, the distinguished preimage is $(h, 1_{x})\in H^{G}({\cB})\times_{\bs_H,\bt}G$.)
The target of $\xi$ is $\Phi(h)$, hence we have $$\Xi(\xi)\in \frac{\textit{GermDiff\,}(S_{1_{x}},S_{ \Phi(h)})}{exp(I_{1_{x}} \overset{\rightarrow}{\cB}_{S_{1_{x}}})}.$$
We define $\chi$ by 
\begin{equation}
%\label{eq:defchi}
\chi(h):=R_{\Phi(h)}^{-1}\circ \Xi(\xi).  
\end{equation}

{\it Step 3.}
The map $\chi$ is a groupoid morphism. This is shown as in the proof of  Thm. ~\ref{thm:main}.   
\end{proof}

We expect the other results of this note to extend to singular subalgebroids too, with suitable modifications. We briefly comment on this.

\begin{remark}
The results of \S~\ref{subsec:constr}  hold for  singular subalgebroids too. Indeed Thm.~\ref{thm:trafogroidreg} is valid in the singular case. The same is true for Rem.~\ref{rem:holbi}, requiring bisections  to be smooth w.r.t. the natural diffeology on the holonomy groupoid \cite{AZ4}, by the very construction of \cite[Thm. 2.7]{AZ2}.
 
As a consequence, the constructions and results of \S~\ref{sec:conj} also hold for singular subalgebroids.

Regarding  \S~\ref{sec:Bott}, in the singular case 
we no longer have a vector bundle $A/B\to M$, hence a proper generalization of 
Prop.~\ref{prop:Bott} can not be formulated in the same terms.
However we have a vector bundle over each embedded leaf $L\subset M$ of $\cB$, namely the quotient vector bundle $A|_L/B_L$. Here $A|_L$ is\footnote{Despite the similar notation, this should not be confused with the Lie algebroid denoted by $A_L$ in \cite[\S 1.1]{AZ2}.} the restriction of $A$ to $L$, and $B_L:=\coprod_{x\in L} \{\alpha_x: \alpha\in \cB\}$ is the subbundle obtained from $\cB$ by evaluation at points of $L$.
(See \cite[\S 3.2 and \S A.3]{AZ2} for the case of singular foliations.)
Denote by $\cB_L$ the transitive Lie algebroid over $L$ whose compactly supported sections are given by $\cB/I_L\cB$.
We expect a version of
Prop.~\ref{prop:Bott}  restricted to the leaf, which yields a Lie algebroid representation of $\cB_L$ on  $A|_L/B_L$, and which for singular foliations reduces to \cite[Prop. 3.12]{AZ2}. The proof presumably would involve arguments from our second proof of Prop.~\ref{prop:Bott}, and from the proofs of  \cite[Prop. 3.1(2) and Prop. 3.12]{AZ2}. 
    \end{remark}
   
\bibliographystyle{habbrv} 
%\bibliography{NotesSingBib}

\end{document}